\author{David Forsman}
\title{Functors of Variance: Heuristic Naturality and Connection to Ends}
\date{\today}
\theoremstyle{plain}
\newtheorem{theorem}{Theorem}[section]
\theoremstyle{definition}
\newtheorem{definition}[theorem]{Definition}
\newtheorem{example}[theorem]{Examples}
\newcommand{\ra}{\rightarrow}
\begin{document}

\maketitle
\begin{abstract}\noindent
The concept of a variance on a category is introduced as a two-sided strict factorization system. By employing variances, we define functors of variance in a more general setting than is usually considered, thereby eliminating the need for their domains to be product categories. Heuristic natural transformations are defined between functors of variance, whose domains are connected via a span. Heuristic naturality encompasses various known types of naturality, such as diagonal, extraordinary, and twisted naturality.

We demonstrate the connection between heuristic naturality and a generalized comma category, revealing the bijective correspondence between heuristic natural transformations and sections of the forgetful functor from the generalized comma category. Moreover, we introduce the concept of limit with respect to the notion of a heuristic transformation. This is the notion of end. It is shown that ends can be calculated via products and equalizers, and the Fubini theorem still holds for ends.
\end{abstract}

\section*{Introduction}
The inception of category theory was largely motivated by the notion of naturality, which can be viewed as a formalization of the concept of canonically constructed collections of functions. For instance, the collection of functions consisting of projections to the first component in a binary product exemplifies a natural transformation. However, some families of maps do not exhibit naturality in the conventional sense. Consider, for example, the linear isomorphisms between real Hilbert spaces and their duals. The coherence of this family of linear maps is encapsulated by twisted naturality.

Additionally, the family of evaluation maps constitutes a natural transformation in one variable and extra-natural in another. Diagonal naturality enables the recovery of these previously mentioned notions of naturality. Nonetheless, the approach to diagonal naturality tends to discard too much information about the original setting. In their paper \cite{Eilenberg1966}, Eilenberg and Kelly characterize when the vertical composition of extra-natural transformations is generally well-defined. Such a theorem proves challenging to recover from the generalized setting of dinatural transformations. Due to practical considerations, many mathematicians have preferred to explore extra-natural transformations over dinatural transformations in areas such as the theory of ends. The perspective of extra-naturality permits a suitable generalization to ends in the context of enriched categories.

In the first section, strict factorization systems are explored and, in particular, variances as two-sided strict factorization systems. A strict factorization system $(E,M)$ on a category $C$ is a pair of subcategories both containing all objects of $C$, where each morphism $f$ equals to $me$ for some unique morphisms $m$ in $M$ and $e$ in $E$. If $(M,E)$ is also a strict factorization system, then $(E,M)$ is said to be a variance on $C$. We show that any compatible pair of functors defined on a variance $(E,M)$ of $C$ uniquely determines a functor with $C$ as its domain. Furthermore, we define the notion of functor of variance. A functor $C\ra D$ of variance $(E,M)$ can be characterized as a compatible pair consisting of a covariant functor $E\ra D$ and a contravariant functor $M\ra D$. Compatibility is simply the requirement that the functors behave the same on objects and respect the factorizations of morphisms.

The second section will be dedicated to the exploration of heuristic naturality, a central concept in our categorical setting. The notion of a span in categories becomes an important way to relate two functors of variance with different domains. The general theory of heuristic transformations centers around spans, but in the special case where the domains are products of categories, we introduce a notion of partition of arguments. With the notion of partition of arguments, we are able to efficiently recover previous notions of naturality like diagonal naturality, extra-naturality, and twisted naturality. A partition of arguments can be read from a formal expression of labeled morphisms. This leads to the name  `heuristic naturality'.

Consider the expression
$$
F(x,x,x,y,z)\xrightarrow{f_{x,y,z}} G(x,z,z)
$$
of morphisms labeled with objects $x,y,z$ from their respective categories. For illustrative purposes, assume that $F$ and $G$ are functors, with $F$ being contravariant at positions $2$ and $5$, $G$ being contravariant at position $1$, and both being covariant in other positions. We define the heuristic naturality condition for the family of morphisms $(f_{x,y,z})_{x,y,z}$ as the commutativity of the diagram
$$
\begin{tikzcd}
{F(x,x,x,y,z)} \arrow[rr, "{f_{x,y,z}}"] & & {G(x,z,z)} \arrow[d, "{G(id,v,v)}"] \\
{F(x,x',x,y,z')} \arrow[u, "{F(id,s,id,id ,v)}"] \arrow[d, "{F(s,id,s,t,id)}"'] & & {G(x,z',z')} \\
{F(x',x',x',y',z')} \arrow[rr, "{f_{x',y',z'}}"'] & & {G(x',z',z')} \arrow[u, "{G(s,id,id)}"']
\end{tikzcd}
$$
for arbitrary morphisms $s\colon x\ra x'$, $t\colon y\ra y'$, and $v\colon z\ra z'$ in the respective categories. The diagram is essentially uniquely constructed from the requirement that there are two parallel paths containing the expressions $f_{x,y,z}$ and $f_{x',y',z'}$, respectively.

The third section brings our attention to the concept of comma categories, which play a crucial role in understanding the interplay between functors and heuristic natural transformations. A heuristic transformation can be seen as a section to a forgetful functor from a corresponding generalized comma category. This provides us with a new perspective on how componentwise naturality implying naturality relates to the lifting of functors.

Lastly, we will focus on the definitions and properties of ends and coends, which are intimately related to the notion of heuristic naturality. We demonstrate that in this generalized setting, the Fubini theorem of ends still holds and ends can be computed via equalizers and products when they exist.

\section{Functors of variance}
Consider a functor $F\colon C\times C\ra D$ which is contravariant in the first and covariant in the second argument. The goal is to talk about the functor $F$ without needing the domain of $F$ to be a product category. Instead, we specify a structure on the domain of $F$ that chooses coherently which morphisms are covariant and contravariant. We can then define that the functor $F$ acts covariantly on the covariant morphisms and contravariantly on the contravariant morphisms. The choice of covariant morphisms $E$ and contravariant morphisms $M$ will define a two-sided strict factorization system $(E,M)$. This structure allows us to produce a theory of functors of mixed variance.

\subsection{Strict factorization systems and variance}
Strict factorization systems were first introduced in the paper \cite{Grandis2000} by Grandis. We use strict factorization systems to formalize the notion of variance on a category and the notion of a functor of mixed variance. 
\begin{definition}
    Let $C$ be a category with subcategories $E$ and $M$. The pair $(E,M)$ is called a \textbf{strict factorization system} if
    \begin{itemize}
        \item the subcategories $E$ and $M$ are \textbf{wide}, meaning they contain all the objects of $C$;
        \item each morphism $f$ in $C$ has a unique factorization $f = me$ where $e$ and $m$ are morphisms in $E$ and $M$, respectively.
    \end{itemize}
    The pair $(E,M)$ is called a \textbf{variance} on $C$, if $(E,M)$ and $(M,E)$ are strict factorization systems on $C$. If $(E,M)$ is a variance on $C$, we call the morphisms in the subcategory $E$ \textbf{covariant morphisms} and those in $M$ \textbf{contravariant morphisms}. A covariant functor $C\ra D$ where $C$ and $D$ are categories with variance is said to preserve variance, if covariant and contravariant morphisms are mapped to covariant and contravariant morphisms, respectively. We denote by $\textbf{CatV}$ the category of small categories equipped with variance and variance-preserving functors among them. 
\end{definition}
The category \textbf{CatV} is a complete category with small coproducts. Through a direct verification, one notices that the forgetful functor $\textbf{CatV}\ra \textbf{Cat}$ strictly creates and preserves limits and coproducts. Recall that a functor $F\colon A\ra C$ is said to strictly create the limit of a diagram $D\colon I\ra A$ if each limiting cone $\theta'$ over $FD$ has a unique cone $\theta$ over $D$ that $F$ maps to $\theta'$; additionally, $\theta$ is a limiting cone. The functor $F$ is said to preserve the limit of $D$, if each limiting cone over $D$ is taken to a limiting cone over $FD$ via $F$. If $F$ preserves/strictly creates the limit for each diagram in $A$, then $F$ is said to preserve/strictly create limits, similarly for colimits.

Consider a category $C$ with a strict factorization system $(E,M)$. Notice that the subcategories $E$ and $M$ are closed under inversion. Generally, $E$ and $M$ satisfy the relative right and left cancellation laws, respectively. Since if $rs$ is a morphism in $E$, where $r$ is a morphism in $C$ and $s$ is a morphism in $E$, then $r = me$ for some morphisms $m$ in $M$ and $e$ in $E$, and so $rs = mes$. Thus $m$ has to be the identity, by unique decomposition. Thus the morphism $r = e$ is in $E$. Dually, the subcategory $M$ satisfies the relative left cancellation law.

\begin{definition}
    Let $C$ be a category with a variance $(E,M)$.
    Examine the unique factorizations of $f\colon x\ra y$ of a morphism in $C$
    $$
    \begin{tikzcd}
x \arrow[r, "f^e"] \arrow[d, "f_m"'] \arrow[rd, "f" description] & f_t \arrow[d, "f^m"] \\
f_s \arrow[r, "f_e"']                                          & y                 
\end{tikzcd}
    $$
    where $f^e,f_e$ and $f^m,f_m$ are morphisms of $E$ and $M$, respectively. We call the factorization $f = f^mf^e$ the \textbf{terminating factorization} of $f$ and $f = f_ef_m$ the \textbf{starting factorization} of $f$ with respect to the variance $(E,M)$ on $C$. The objects $f_s$ and $f_t$ are called the \textbf{starting} and \textbf{terminating objects}, respectively, associated to $f$.
\end{definition}
Remarkably, every strict factorization system on a groupoid can be viewed as a variance. To see this, consider a groupoid $G$ equipped with a strict factorization system $(E,M)$. Fix a morphism $g$ in $G$. Since $g^{-1} = me$ for some morphisms $m$ in $M$ and $e$ in $E$, we have $g = e^{-1}m^{-1}$, which yields an $(M,E)$ factorization of $g$. Since the intersection $E\cap M$ is a discrete category, this factorization is unique.

Turning to the case of a category $C$ with a variance $(E,M)$, suppose $f$ and $g$ are morphisms in $C$ with $f$ covariant and $g$ contravariant. In this setting, we have the equations:
\begin{equation*}
\begin{aligned}
    &f = f^e = f_e, \qquad &f^m = id_y, \qquad &f_m = id_x, \qquad &f_s = x, \qquad &f_t = y, \\
    &g = g^m = g_m, \qquad &g^e = id_x, \qquad &g_e = id_y, \qquad &g_s = y, \qquad &g_t = x.
\end{aligned}
\end{equation*}
Consider morphisms $x\xrightarrow{f}y\xrightarrow{g}z$ in a category $C$ with variance $(E,M)$. The morphisms $f$ and $g$ induce a commutative diagram
\begin{equation}\label{diagram induced by composable morphisms}
\begin{tikzcd}
x \arrow[rd, "f" description] \arrow[d, "f_m"'] \arrow[r, "f^e"] & f_t \arrow[d, "f^m" description] \arrow[r, "(g^ef^m)^e"]                    & (g^ef^m)_t = (gf)_t \arrow[d, "(g^ef^m)^m"] \\
f_s \arrow[r, "f_e"] \arrow[d, "(g_mf_e)_m"']                    & y \arrow[rd, "g" description] \arrow[r, "g^e"] \arrow[d, "g_m" description] & g_t \arrow[d, "g^m"]                        \\
(g_mf_e)_s = (gf)_s \arrow[r, "(g_mf_e)_e"']                     & g_s \arrow[r, "g_e"']                                                       & z                                          
\end{tikzcd}
\end{equation}
By the unique factorizations, we immediately obtain
\begin{align}\label{The variance factorization equations for composite}
    (gf)^m &= g^m(g^ef^m)^m
    &(gf)^e = (g^ef^m)^ef^e\\
    (gf)_m &= (g_mf_e)_m f_m
    &(gf)_e = g_e(g_mf_e)_e
\end{align}
Hence the codomain of $(gf)_m$ is the same as the codomain of $(g_mf_e)_m$, and so $(g_mf_e)_s = (gf)_s$. The equation on the terminating objects is obtained similarly: $(gf)_t = (g^ef^m)_t$. An important fact about the diagram (\ref{diagram induced by composable morphisms}) is that each morphism pointing to the right is covariant and each morphism pointing downwards is contravariant. Later we define what a functor of variance is. Such a functor maps a morphism $f\colon x\ra y$ to a morphism $F(f)\colon F(f_s)\ra F(f_t)$. Hence the contravariant morphisms have
their direction turned and the direction of a covariant morphism is respected. 
\begin{example}
The notion of variance is prevalent in many areas of mathematics:

\begin{itemize}
\item Every category $A$ has a \textbf{covariant variance}, where each morphism is covariant, and a \textbf{contravariant variance}, where each morphism is defined to be contravariant.\item A product category of categories with variance has a canonical variance structure. We define a morphism of the product category to be covariant (contravariant) if each projection is covariant (contravariant). This construction directly follows from the earlier observation that the forgetful functor $\textbf{CatV}\ra \textbf{Cat}$ strictly creates and preserves limits.

\item Let $N$ be a monoid considered as a single-object category. A variance on $N$ is a pair of submonoids $(E,M)$ where for each element $n$ of $N$, there exist unique elements $e, e' \in E$ and $m, m' \in M$ such that $n = me$ and $n = e'm'$. If $N$ is a group, then as previously noted for groupoids, any strict factorization system on $N$ will be a variance and both submonoids $E$ and $M$ will be subgroups.

\begin{itemize}
    \item Examine the set of strictly positive integers with the multiplicative structure. Take a partitioning of the prime numbers into $P_1, P_2$ and denote the generated submonoids by $\overline{P}_1$ and $\overline{P}_2$. By the fundamental theorem of arithmetic, $(\overline{P}_1, \overline{P}_2)$ is a variance on the multiplicative monoid of positive integers.
    
    \item Let $H$ and $N$ be groups. Assume $H$ acts on $N$. Consider the associated outer semi-direct product $H \ltimes N$. Here, $H$ and $N$ can be considered as subgroups of the outer semi-direct product. The pair $(N, H)$ defines a variance on $H \ltimes N$. The group $N$ is a normal subgroup of $H \ltimes N$.

    \item The notion of variance on a group is strictly more general than internal semi-direct products: Consider the symmetric group $S_4$ of permutations of four elements. The group $S_4$ has non-unique subgroups of order $8$ and $3$, respectively. Since $3$ and $8$ are coprime integers and $|S_4| = 3 \times 8$, it follows that all pairs $(E, M)$ form a variance on $S_4$ where $E$ and $M$ are subgroups of order $8$ and $3$, respectively. The fact that there are multiple subgroups of order $3$ and $8$ and the numbers are coprime implies that neither of the subgroups $E$ and $M$ can be normal.
\end{itemize}

\item Let $C$ be any category. Let $J$ be a union of path components of $C$. We may uniquely specify a variance on $C$ by setting all the morphisms whose domain lies in $J$ to be covariant and contravariant otherwise.

\item Let $I$ be a set. Let $A$ be a family of categories $A_i$ for $i\in I$, also called an $I$-\textbf{list} of categories. Let $v\colon I\ra \{0,1\}$ be a function. We call $v$ an \textbf{index-variance} on $A$ and an \textbf{index} $i\in I$ is said to be \textbf{covariant} if $v(i) = 0$ and \textbf{contravariant} otherwise. Consider the product category $\prod A = \prod_{i\in I} A_i$. The function $v$ defines a variance on $\prod A$ by setting $A_i$ to have covariant (contravariant) variance when $i$ is a covariant (contravariant) index for each $i\in I$. Thus, the product $\prod A$ obtains a variance from the categories $A_i$, for $i\in I$, via the product. We call this variance on $\prod A$ an index-variance with respect to $v$.
\begin{itemize}
    \item If $A_1,\ldots, A_n$ are categories and $s_1,\ldots, s_n\in\{0,1\}$, we say that the tuple $v = (s_1,\ldots, s_n)$ defines an index-variance on the category 
    $$
    A_1\times \cdots\times A_n.
    $$
\end{itemize}

\item Let $C$ be a category with variance $(E,M)$. A subcategory $R$ of $C$ inherits the variance of $C$ if $R$ is closed under factoring with respect to the variance: If $f$ is a morphism in $R$, then $f^m,f^e,f_m,f_e$ are morphisms in $R$. Then the restrictions of $E$ and $M$ to $R$ define a variance on $R$.

\begin{itemize}
    \item Consider an $I$-list of categories with variance $A$. The product category $\prod A$ has a subcategory of \textbf{finitely supported} morphisms. A morphism $f$ of $\prod A$ is finitely supported if $f_i$ is not an identity morphism for at most finitely many $i\in I$. The subcategory of finitely supported morphisms of $\prod A$ inherits the variance from $\prod A$.
\end{itemize}\end{itemize}
\end{example}

One important property of a variance on a category is that a functor with a domain equipped with a variance can be reconstructed from a compatible pair of functors defined only on the contravariant and covariant morphisms. 
\begin{theorem}[Decomposition theorem]\label{Decomposition theorem}
    Let $C$ be a category with variance $(E,M)$. Assume that $G\colon E\ra D$ and $H\colon M\ra D$ are compatible functors: The functors $G$ and $H$ map objects the same and
    $$
    H(f^m)G(f^e) = G(f_e)H(f_m)
    $$
    for each morphism $f$ in $C$. Then there exists a unique functor $F\colon C\ra D$ extending $G$ and $H$. 
\end{theorem}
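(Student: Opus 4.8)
The plan is to define $F$ by the only prescription that has a chance of working and then verify functoriality. On objects I would set $F(x) := G(x)$, which equals $H(x)$ since $G$ and $H$ agree on objects. On a morphism $f\colon x\ra y$ I would use its terminating factorization $f = f^m f^e$ (so that $f^e$ lies in $E$ and $f^m$ in $M$) and put $F(f) := H(f^m)G(f^e)$; this is a morphism $F(x)\ra F(y)$ because $G(f_t) = H(f_t)$. Uniqueness then comes for free: any functor $F'$ restricting to $G$ on $E$ and to $H$ on $M$ must satisfy $F'(f) = F'(f^m)F'(f^e) = H(f^m)G(f^e)$, so $F' = F$, and the same computation shows no other definition is available. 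I would also note that one can run the argument symmetrically using the starting factorization $f = f_e f_m$, setting $F(f) = G(f_e)H(f_m)$, and that the compatibility hypothesis is precisely the assertion that these two prescriptions coincide.

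Next come the easy axioms. For $f$ in $E$ the terminating factorization is $f = id\cdot f$, so $F(f) = H(id)G(f) = G(f)$; dually $F$ restricted to $M$ equals $H$. In particular $F(id_x) = id_{F(x)}$, so $F$ preserves identities and genuinely extends $G$ and $H$.

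The substance is the composition law $F(gf) = F(g)F(f)$ for $x\xrightarrow{f}y\xrightarrow{g}z$. Here I would invoke the commutative diagram (\ref{diagram induced by composable morphisms}) together with the factorization identities (\ref{The variance factorization equations for composite}), which give $(gf)^m = g^m(g^ef^m)^m$ and $(gf)^e = (g^ef^m)^e f^e$. Substituting into the definition of $F$,
$$
F(gf) = H\big(g^m(g^ef^m)^m\big)\,G\big((g^ef^m)^e f^e\big) = H(g^m)\Big[H\big((g^ef^m)^m\big)G\big((g^ef^m)^e\big)\Big]G(f^e).
$$
The key point --- and the place where the two-sidedness of the variance is used rather than a bare strict factorization system --- is that the morphism $g^e f^m$ is literally $f^m$ in $M$ followed by $g^e$ in $E$, hence already displayed in the form of a starting factorization; by uniqueness of $(M,E)$-factorizations this forces $(g^ef^m)_m = f^m$ and $(g^ef^m)_e = g^e$. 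Applying the compatibility hypothesis to the morphism $g^e f^m$ then gives $H((g^ef^m)^m)G((g^ef^m)^e) = G((g^ef^m)_e)H((g^ef^m)_m) = G(g^e)H(f^m)$, whence
$$
F(gf) = H(g^m)\,G(g^e)\,H(f^m)\,G(f^e) = \big[H(g^m)G(g^e)\big]\big[H(f^m)G(f^e)\big] = F(g)F(f).
$$

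The step I expect to be the genuine obstacle is this last one: recognizing that the cross-term $g^ef^m$ appearing in the factorization of the composite is itself in starting-factorization form, and then inserting the compatibility equation at exactly that spot. Everything else --- identities, extension, uniqueness --- is routine bookkeeping with the definitions of terminating and starting factorizations.
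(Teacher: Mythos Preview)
Your proposal is correct and follows essentially the same route as the paper: define $F(f)=H(f^m)G(f^e)$, use the identities $(gf)^m=g^m(g^ef^m)^m$ and $(gf)^e=(g^ef^m)^ef^e$, and then apply the compatibility hypothesis to the cross-term $g^ef^m$ together with the observation that its starting factorization is $(g^ef^m)_e=g^e$, $(g^ef^m)_m=f^m$. Your explicit remark that this last identification is where the two-sidedness of the factorization system is actually used is a nice clarification the paper leaves implicit.
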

\begin{proof}
    The uniqueness of $F$ is clear. We show existence. Define a functor $F\colon C\ra D$ mapping the objects the same as $G$. For a morphism $f$ in $C$ we define that $F(f) = H(f^m)G(f^e)$. Identities are taken to identities by $F$. Left to show is the compatibility with composition. Let $x\xrightarrow{f}y\xrightarrow{g} z$ be morphisms in $C$. Now
    \begin{align*}
        F(gf) 
        &= H((gf)^m)G((gf)^e)\\
        &= H(g^m(g^ef^m)^m) G((g^ef^m)^e f^e)\\
        &= H(g^m)H((g^ef^m)^m) G((g^ef^m)^e) G(f^e)\\
        &= H(g^m)G((g^ef^m)_e)H((g^ef^m)_m)G(f^e)\\
        &= H(g^m)G(g^e) H(f^m)G(f^e)\\
        &= F(g)F(f) & \tag*{\qedhere}
    \end{align*}
\end{proof}
Let $C$ be a category with a variance $(E,M)$. The subcategories $E$ and $M$ define the canonical functor $E+M\ra C$. Let $D$ be the full subcategory of the coslice category $(E+M)/\textbf{Cat}$ consisting of functors whose generalized kernel, as defined in \cite{Bednarczyk1999}, contains the pair of sequences of morphisms $((f^m,f^e),(f_e,f_m))$ for every morphism $f$ in $C$. The previous theorem demonstrates that the functor $E+M\ra C$ is initial in $D$.

Now, given a pair $(A,B)$ of subcategories of $C$, we ask whether we can define a subcategory of $(A+B)/C$ such that if the canonical functor $A+B\ra C$ is an initial object of this subcategory, then the pair $(A,B)$ defines a variance on $C$. An affirmative answer to this question could allow us to generalize the notion of variance from the category $\textbf{Cat}$ of small categories to a broader class of categories.

\subsection{Functors of mixed variance}
Consider the Decomposition Theorem \ref{Decomposition theorem} where a covariant functor $A\ra C$ is reconstructed from a pair of compatible covariant functors $E\ra C$ and $M\ra C$ defined on a variance $(E,M)$ on the category $A$. In this subsection, we examine the possibility that the functor $M\ra C$ is instead a contravariant functor. With a notion of compatibility, we characterize the notion of a functor of mixed variance as a compatible pair of functors, consisting of a covariant functor $E\ra C$ and a contravariant functor $M\ra C$.
\begin{definition}[Functor of mixed variance]
Let $A$ and $C$ be categories and let $(E,M)$ be a variance on $A$. A functor $A\xrightarrow[]{F} C$ of variance $(E,M)$ consists of a pair of functions from the objects and morphisms of $A$ to the objects and morphisms of $C$, respectively, both denoted by $F$:
\begin{itemize}
    \item Identities are preserved under $F$.
    \item Let $f\colon x\ra y$ be an morphism of $A$. Then $F(f)\colon F(f_s)\ra F(f_t)$, where $f_s$ and $f_t$ are the starting and ending objects associated to $f$ with respect to the variance $(E,M)$.
\end{itemize}
Furthermore, we require the \textbf{compatibility with composition}: Let $x\xrightarrow{f}y\xrightarrow{g}z$ be morphisms in $A$. The diagram
$$
\begin{tikzcd}
F(f_s) \arrow[rr, "F(f)"]                                                          &  & F(f_t) \arrow[d, "F((g^ef^m)^e)"]  \\
F(gf)_s \arrow[u, "F((g_mf_e)_m)"] \arrow[d, "F((g_mf_e)_e)"'] \arrow[rr, "F(gf)"] &  & F(gf)_t                            \\
F(g_s) \arrow[rr, "F(g)"']                                                         &  & F(g_t) \arrow[u, "F((g^ef^m)^m)"']
\end{tikzcd}
$$
commutes. We record the equations:
\begin{equation}\label{Functoriality equations}
\begin{aligned}
    &F(gf) = F((g^ef^m)^e) F(f) F((g_mf_e)_m)\\
     &F(gf) = F((g^ef^m)^m) F(g) F((g_mf_e)_e)
\end{aligned}
\end{equation}

\end{definition}

Consider a category $A$ with a variance $(E,M)$. Let $x\xrightarrow{f}y\xrightarrow{g}z$ be morphisms in $A$. We obtain the commutative diagram
$$
\begin{tikzcd}
x \arrow[rd, "f" description] \arrow[d, "f_m"'] \arrow[r, "f^e"] & f_t \arrow[d, "f^m" description] \arrow[r]                    & (gf)_t \arrow[d] \\
f_s \arrow[r, "f_e"] \arrow[d]                    & y \arrow[rd, "g" description] \arrow[r, "g^e"] \arrow[d, "g_m" description] & g_t \arrow[d, "g^m"]                        \\
(gf)_s \arrow[r]                     & g_s \arrow[r, "g_e"']                                                       & z                                          
\end{tikzcd}
$$
This diagram is the same as (\ref{diagram induced by composable morphisms}). Notice again that all vertical and horizontal arrows are contravariant and covariant, respectively. Applying a functor $F\colon A\ra C$ of variance $(E,M)$, we obtain the diagram 
$$
\begin{tikzcd}
F(x) \arrow[r]                                            & F(f_t) \arrow[r]                                          & F((gf)_t)        \\
F(f_s) \arrow[u] \arrow[r] \arrow[ru, "F(f)" description] & F(y) \arrow[u] \arrow[r]                                  & F(g_t) \arrow[u] \\
F((gf)_s) \arrow[u] \arrow[r]                             & F(g_s) \arrow[r] \arrow[u] \arrow[ru, "F(g)" description] & F(z) \arrow[u]  
\end{tikzcd}
$$
Erasing the objects $F(x),F(y)$ and $F(z)$ from the diagram with the associated morphisms and adding the morphism $F(gf)$ yields the diagram
$$
\begin{tikzcd}
                                                                & F(f_t) \arrow[r]                      & F((gf)_t)        \\
F(f_s) \arrow[ru, "F(f)" description]                           &                                       & F(g_t) \arrow[u] \\
F((gf)_s) \arrow[u] \arrow[r] \arrow[rruu, "F(gf)" description] & F(g_s) \arrow[ru, "F(g)" description] &                 
\end{tikzcd}
$$
which is exactly the diagram for the compatibility with composition.

Similarly, as before, we obtain a decomposition theorem, but this time for functors of mixed variance. 
\begin{definition}[Compatible Pair of Functors]
Given a category $A$ with variance $(E, M)$ and a category $C$, a \textbf{compatible pair} of functors consists of a covariant functor $G\colon E\ra C$ and a contravariant functor $H\colon M\ra C$, satisfying the following conditions:
\begin{itemize}
\item $G$ and $H$ map objects the same;
\item $G(f^e)H(f_m) = H(f^m)G(f_e)$ holds for every morphism $f$ in $A$.
\end{itemize}
\end{definition}

\begin{theorem}[Decomposition theorem of mixed variance]
Let $A$ be a category with variance $(E,M)$, and let $C$ be a category. Consider a functor $F\colon A\ra C$ of variance $(E,M)$. We assign $F\mapsto (F|_M E,F|_ M)$. This assignment is bijective on compatible pairs $(G\colon E\ra C, H\colon M\ra C)$.
\end{theorem}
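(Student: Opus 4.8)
The plan is to check three things: that the assignment $F\mapsto (F|_E,F|_M)$ takes values among compatible pairs, that it is injective, and that it is surjective. The first two are obtained together from a single computation, and the surjectivity argument closely parallels the Decomposition Theorem~\ref{Decomposition theorem}, with the covariant functor $M\ra D$ there replaced by a contravariant one.

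So let $F\colon A\ra C$ be a functor of variance $(E,M)$. Specializing the functoriality equations~(\ref{Functoriality equations}) to a composable pair lying entirely in $E$ and using the explicit formulas $e^e=e$, $e_m=\mathrm{id}$ for covariant morphisms (which make the auxiliary ``mixing'' factors identities), one gets $F(e'e)=F(e')\,F(e)$, so $F|_E\colon E\ra C$ is a genuine covariant functor; dually $F|_M\colon M\ra C$ is a contravariant functor, and the two manifestly agree on objects. Next, applying~(\ref{Functoriality equations}) with the composable pair $(f,g)$ there replaced by $(f_m,f_e)$ --- whose composite is $f$ itself, by the starting factorization --- the terms of~(\ref{Functoriality equations}) simplify since $f_m\in M$ and $f_e\in E$, and the two equations collapse to
$$
F(f) = F|_E(f^e)\,F|_M(f_m) = F|_M(f^m)\,F|_E(f_e).
$$
The middle equality is exactly the compatibility condition for $(F|_E,F|_M)$, so the assignment is well defined, and the outer equalities express $F$ through $F|_E$ and $F|_M$, which gives injectivity.

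For surjectivity, begin with a compatible pair $(G\colon E\ra C,\ H\colon M\ra C)$ and mimic the construction in Theorem~\ref{Decomposition theorem}: let $F$ agree with $G$ (equivalently with $H$) on objects and set $F(f):=G(f^e)\,H(f_m)$ for a morphism $f\colon x\ra y$. Compatibility of the pair supplies the alternative formula $F(f)=H(f^m)\,G(f_e)$, and inspecting sources and targets shows $F(f)\colon F(f_s)\ra F(f_t)$. Preservation of identities is immediate; specializing the definition to $e\in E$ (where $e^e=e$, $e_m=\mathrm{id}$) yields $F|_E=G$, and to $m\in M$ (where $m^e=\mathrm{id}$, $m_m=m$) yields $F|_M=H$. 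The substantive step is the pair of functoriality equations~(\ref{Functoriality equations}) for a composable pair $x\xrightarrow{f}y\xrightarrow{g}z$. For the first, expand $F(gf)=G((gf)^e)\,H((gf)_m)$, substitute $(gf)^e=(g^ef^m)^ef^e$ and $(gf)_m=(g_mf_e)_mf_m$ from~(\ref{The variance factorization equations for composite}), and use that $G$ is covariant and $H$ contravariant to rewrite the result as
$$
G\bigl((g^ef^m)^e\bigr)\,\bigl(G(f^e)\,H(f_m)\bigr)\,H\bigl((g_mf_e)_m\bigr) = F\bigl((g^ef^m)^e\bigr)\,F(f)\,F\bigl((g_mf_e)_m\bigr),
$$
using $F|_E=G$ and $F|_M=H$ on the outer factors together with $(g^ef^m)^e\in E$ and $(g_mf_e)_m\in M$. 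The second equation is the mirror image: expand $F(gf)=H((gf)^m)\,G((gf)_e)$, apply the companion identities $(gf)^m=g^m(g^ef^m)^m$ and $(gf)_e=g_e(g_mf_e)_e$, and recognize the middle block as $F(g)=H(g^m)\,G(g_e)$.

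The main obstacle is organizational rather than conceptual: one must track the directions of all the arrows in the grid~(\ref{diagram induced by composable morphisms}) and confirm that each auxiliary factor --- $(g^ef^m)^e$ and $(g_mf_e)_e$ in $E$, $(g^ef^m)^m$ and $(g_mf_e)_m$ in $M$ --- lies in the subcategory that lets $F$ act on it via $G$ or via $H$ respectively. Once this bookkeeping is in place, the computation closely parallels the proof of Theorem~\ref{Decomposition theorem}, the only difference being that the contravariance of $H$ reverses the order of the two $M$-factors in each product, which is precisely what makes the mixed-variance functoriality equations balance.
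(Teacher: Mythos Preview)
Your argument is correct and follows essentially the same route as the paper: you first extract the two restriction formulas $F(f)=F|_E(f^e)\,F|_M(f_m)=F|_M(f^m)\,F|_E(f_e)$ from the functoriality equations (yielding well-definedness and injectivity at once), and then reconstruct $F$ from a compatible pair via $F(f)=G(f^e)H(f_m)$, verifying~(\ref{Functoriality equations}) with the factorization identities~(\ref{The variance factorization equations for composite}). The only cosmetic difference is that you instantiate~(\ref{Functoriality equations}) at the composable pair $(f_m,f_e)$, whereas the paper uses $(\mathrm{id}_x,f)$ and $(f,\mathrm{id}_y)$ to reach the same splitting formulas.
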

\begin{proof}
    First, we show that the assignment is well defined. For this, it suffices to show that the functor $F$ behaves covariantly on $E$, contravariantly on $M$, and that the diagram
    $$
    \begin{tikzcd}
F(x) \arrow[r, "F(f^e)"]                                                      & F(f_t)                    \\
F(f_s) \arrow[r, "F(f_e)"'] \arrow[u, "F(f_m)"] \arrow[ru, "F(f)" description] & F(y) \arrow[u, "F(f^m)"']
\end{tikzcd}
    $$
    commutes for each morphism $f\colon x\ra y$ in $A$. Assume first that $x\xrightarrow{f}y\xrightarrow{g}y$ are of covariant type. Notice that $f^e = f_e = f$ and $f^m = id_y$ and $f_m = id_x$. Thus
    \begin{align*}
    F(gf)
    &= F((g^ef^m)^e)F(f)F((g_mf_e)_m)\\
    &= F(g) F(f) F(id) \\
    &= F(g)F(f).
    \end{align*}
    Assume next that $g$ and $f$ are of contravariant type. Notice that $f^m =f_m = f$ and $f^e = id_x$ and $f_e = id_y$. Now
    \begin{align*}
    F(gf) 
    &= F((g^ef^m)^e) F(f) F((g_mf_e)_m)\\
    &= F(id)F(f)F(g)\\
    &= F(f)F(g)
    \end{align*}
Let $f$ be an arbitrary morphism of $A$. Using the first of (\ref{Functoriality equations}) we have
    \begin{align*}
        F(f) 
        &=F(fid_x)\\
        &=F((f^eid_x^m)^e)F(id)F((f_mid_{x,e})_m\\
        &=F(f^e)F(f_m)
    \end{align*}
    and using the second equation of (\ref{Functoriality equations})
    \begin{align*}
        F(f)
        &= F(id_y f)\\
        &= F((id_y^ef^m)^m)F(id_y)F((id_{y,m}f_e)_e)\\
        &= F(f^m) F(f_e).
    \end{align*}
    Thus the assignment $F\mapsto (F\mid_ E,F\mid_ M)$ is well defined onto compatible pair of functors. 

    Fix a compatible pair $(G\colon E\ra C,H\colon M\ra C)$ of functors. Define a functor $F\colon A\ra C$ of variance $(E,M)$ mapping the objects the same as $G$ and $H$. For a morphism $f\colon x\ra y$ define $F(f) = G(f^e)H(f_m)$. Now $F(f)\colon F(f_s)\ra F(f_t)$. Evidently, $F$ maps identities to identities. To show the functoriality conditions, fix morphism $x\xrightarrow{f}y\xrightarrow{g}z$. Now
    \begin{align*}
        F(gf) 
        &= G((gf)^e)H((gf)_m) &\text{(The definition of $F$)}\\
        &= G((g^ef^m)^ef^e) H((g_mf_e)_mf_m) & (\ref{The variance factorization equations for composite}) \\
        &= G((g^ef^m)^e)G(f^e)H(f_m)H((g_mf_e)_m)&\text{(The functoriality of $G$ and $H$)}\\
        &= F((g^ef^m)^e)F(f)F((g_mf_e)_m) &\text{(The definition of $F$)}
    \end{align*}
    and using the compatibility condition we attain
    \begin{align*}
    F(gf)
    &= G((gf)^e)H((gf)_m) &\text{(The definition of $F$)} \\
    &= H((gf)^m)G((gf)_e) &\text{(Compatibility condition)} \\
    &= H(g^m(g^ef^m)^m)G(g_e(g_mf_e)_e) &(\ref{The variance factorization equations for composite}) \\
    &= H((g^ef^m)^m)H(g^m)G(g_e)G((g_mf_e)_e) &\text{(The functoriality of $G$ and $H$)} \\
    &= H((g^ef^m)^m)G(g^e)H(g_m)G((g_mf_e)_e) &\text{(Compatibility condition)} \\
    &= F((g^ef^m)^m)F(g)F((g_mf_e)_e) &\text{(The definition of $F$).}
\end{align*}

    Lastly, the functor $F$ of variance $(E,M)$ is the unique functor that the assignment maps to $(G,H)$. This proves the claim.
\end{proof}

A compatible pair of functors defined on covariant and contravariant morphisms, respectively, gives rise to a functor of mixed variance from the whole domain. This makes it easier to conceptualize functors of mixed variance. 
\begin{example}\hfill
    \begin{itemize}
        \item A covariant functor $C\ra D$ is a functor of mixed variance with respect to the covariant variance on $C$. Similarly for contravariant functors from $C$.
        \item Let $C$ be a locally small category. The hom-functor is a functor $C\times C\ra \textbf{Set}$ of index-variance $(1,0)$.
        \item Let $C$ be a category with variance $(E,M)$. Let $F\colon C\ra D$ be a functor. Assume that $F(m)$ is invertible for each morphism $m$ in $M$. We can define a functor $G\colon C\ra D$ of variance $(E,M)$ mapping objects the same as $F$. We set $G(e) = F(e)$ and $G(m) = F(m)^{-1}$ for all morphism $e$ and $m$ in $E$ and $M$, respectively. To show that we have a well-defined functor of variance, it suffices to check the compatibility condition. Let $f\colon x\ra y$ be a morphism in $C$. We need to check that $F(f^e)F(f_m)^{-1} = F(f^m)^{-1}F(f_e)$, which is equivalent to 
        $F(f^m)F(f^e) = F(f_e) F(f_m)$, which by functoriality is just the equality $F(f) = F(f)$.

        Essentially the same construction works the other way around too. If a functor of mixed variance is given and all the contravariant morphisms are mapped to invertible ones, we may turn the functor of mixed variance into a covariant functor.
        
        \item Let $G$ be a group with a variance $(E,M)$. The mapping $g\mapsto g^e$ is a group homomorphism $G\ra E$ if and only if $g^e = g_e$ for every $g\in G$. This map restricts into two trivial group homomorphisms, $E\xrightarrow{id_E} E$ and $M\xrightarrow{0} E$, and the compatibility condition is exactly $g^e = g_e$ for each $g\in G$. If $E$ is a normal subgroup of $G$, then this condition is equivalent to the elements of $E$ commuting with the elements of $M$, which forces an isomorphism $G\cong E\times M, g\mapsto (g^e,g^m)$.
    \end{itemize}
\end{example}
\section{Heuristic naturality}
The concept of a functor with variance allows us to define heuristic natural transformations, which encompass the usual notions of naturality like dinaturality and extraordinary naturality.

\begin{definition}[Spans]
    Let $A$ and $B$ be categories. A \textbf{span} on $A$ is a pair $(R,L)$, where $R$ is a category and $L\colon R\ra A$ is a functor. A span $(R,L)$ from $A$ to $B$ is a span on $A\times B$ and we denote this by $A\ra B$. By $L_i$ we denote the functor $L$ post-composed with the projection $\pi_i$ for $i = 1,2$.
\end{definition}

\begin{definition}[Heuristic naturality]
    Let $A,B$ and $R$ be categories. Assume that $A$ and $B$ are associated with variances. Let $F\colon A\ra C$ and $G\colon B\ra C$ be functors of variance. Let $(R,L)$ be a span $A\ra B$. Let $\eta$ be a family of morphisms 
    $$
    \eta_x\colon FL_1(x)\ra GL_2(x)
    $$
    for each object $x$ of $R$. We call $\eta\colon F\xRightarrow[L]{} G$ a \textbf{heuristic transformation} and an $L$-\textbf{transformation}, if $\eta$ satisfies the \textbf{heuristic naturality condition}: Let $f\colon x\ra y$ be a morphism in $R$. Consider the factorizations of $g\coloneqq L_1(f)$ and $h \coloneqq L_2(f)$
    $$
    \begin{tikzcd}
L_1(x) \arrow[d, "g_m"] \arrow[rd, "g" description] &        & L_2(x) \arrow[r, "h^e"] \arrow[rd, "h" description] & h_t \arrow[d, "h^m"] \\
g_s \arrow[r, "g_c"]                                & L_1(y) &                                                     & L_2(y)              
\end{tikzcd}
    $$
    The $L$-naturality with respect to $f$ is the requirement that the diagram
    $$
\begin{tikzcd}
F(L_1(x)) \arrow[rr, "\eta_x"]                  &  & G(L_2(x)) \arrow[d, "G(h^e)"]  \\
F(g_s) \arrow[u, "F(g_m)"] \arrow[d, "F(g_e)"'] &  & G(h_t)                         \\
F(L_1(y)) \arrow[rr, "\eta_y"']                 &  & G(L_2(y)) \arrow[u, "G(h_m)"']
\end{tikzcd}
    $$
    commutes. The $L$-naturality of $\eta$ means that $\eta$ is $L$–natural with respect to all morphisms $f$ in $R$.
\end{definition}

The notions of dinaturality and extra-naturality involve the domains of the functors to be product categories that have possibly the same category appearing more than once in the domains. We define a relation called partitioning of arguments that records which categories will be thought the same.
\begin{definition}[Partition of arguments]
    Let $A$ be an $I$-list of categories. Let $R$ be an equivalence relation on $I$ satisfying the \textbf{compatibility condition} where $iRj$ implies $A_i = A_j$ for each $i,j\in I$. The relation $R$ is called the \textbf{partitioning of the arguments} on $A$. If, furthermore, $B$ is a $J$-list of categories we say that a relation $S\colon A\ra B$ is a partition of arguments from $A$ to $B$, if $S$ is a partition of arguments of the \textbf{concatenated} $(I+J)$-list of categories $AB$.\footnote{An $I$-list of categories $A$ can be thought as a functor $I\ra\textbf{CAT}$, where the set $I$ is identified with a small discrete category. The concatenation of functors $I\ra\textbf{CAT}$ and $J\ra\textbf{CAT}$ is the canonical functor $(I+J)\ra\textbf{CAT}$, via the universal property of coproducts.}
\end{definition}

Let $A$ be an $I$-list of categories. Let $R$ be a partitioning of arguments of $A$. The relation $R$ produces a span on $\prod A$. We define the category $A_R$ as subcategory of $\prod A$ by choosing the morphisms $f$, called $R$-morphisms, of $\prod A$ satisfying $f_i = f_j$ for all $(i,j)\in R$. Similarly, a partition of arguments $S\colon A\ra B$ produces a span $\prod A\ra \prod B$ which we also denote with $S$. 

Consider functors $F\colon \prod A\ra C$ and $G\colon \prod B\ra C$ of index-variances $s$ and $t$, respectively. Let $R\colon A\ra B$ be a partition of the arguments. An $R$-transformation $\eta\colon F\Rightarrow G$ consists of morphisms
$$
\eta_x\colon F(\pi_1 x)\ra G(\pi_2 x)
$$
for $R$-objects $x$. Moreover, they must satisfy the following condition: Let $f\colon x\ra y$ be an $R$-morphism. Consider the factorizations of $f$ in $\prod AB$:
$$
\begin{tikzcd}
x \arrow[r, "f^e"] \arrow[d, "f_m"'] \arrow[rd, "f" description] & f_t \arrow[d, "f^m"] \\
f_s \arrow[r, "f_e"']                                            & y                   
\end{tikzcd}
$$
The heuristic naturality condition with respect to $f$ is then the commutativity of the diagram
$$
\begin{tikzcd}
F(\pi_1 x) \arrow[rr, "\eta_x"]                                  &  & G(\pi_2 x) \arrow[d, "G(\pi_2 f^e)"] \\
F(\pi_1 f_s) \arrow[u, "F(\pi_1f_m)"] \arrow[d, "F(\pi_1 f_e)"'] &  & G(\pi_2 f_t)                         \\
F(\pi_1y) \arrow[rr, "\eta_y"']                                  &  & G(\pi_2y) \arrow[u, "G(\pi_2 f^m)"']
\end{tikzcd}
$$
It is worth noting that the $R$-category $AB_R$ is isomorphic to $\prod_{i\in K} (AB)_i$, where $K$ is a set of representatives of the equivalence relation $R$. We view the pre-ordered set $(I+J,R)$ as a category and define a functor $\overline{AB}\colon (I+J,R)\ra\textbf{Cat}$ that maps the elements of $I+J$ to the same as $AB$ and maps all morphisms to identities. We see that $(AB)_R$ is the limit of the functor $\overline{AB}$ and the inclusion $K\hookrightarrow (I+J,R)$ is an equivalence of categories. This shows that $(AB)_R$ is isomorphic to the product $\prod_{i\in K} (AB)_i$.

For instance, in practice we may encounter a generic expression of the type:
\begin{equation}\label{Formal expression for natural maps}
    F(x,y,y)\xrightarrow{f_{x,y}}G(x,x,y).
\end{equation}
Here we assume that $F\colon X\times Y\times Y\ra C$ and $G\colon X\times X\times Y\ra C$ of variances $(1,0,1)$ and $(1,1,0)$, respectively. We can deduce the naturality condition for morphisms $f_{x, y}$ from the variances of $F$ and $G$. Observe that the partitioning of the arguments $R$ can be read from the variables used in the expression. We identify those positions, where the same variable appears. Here $R$ has two equivalence classes $\{1,4,5\}$ and $\{2,3,6\}$, since $x$ appears in positions $1,4$ and $5$, and $y$ appears in positions $2,3$ and $6$. Fixing morphisms $\alpha\colon x\ra x'$ and $\beta\colon y\ra y'$, we can see that the $R$-naturality condition with respect to $\alpha$ and $\beta$ is then the commutativity of the diagram
$$
\begin{tikzcd}
{F(x,y,y)} \arrow[rr, "{f_{x,y}}"]                                                         &  & {G(x,y,y)} \arrow[d, "{G(id_x,id_y,\beta)}"]          \\
{F(x',y,y')} \arrow[u, "{F(\alpha, id_y,\beta)}"] \arrow[d, "{F(id_{x'},\beta,id_{y'})}"'] &  & {G(x,y,y')}                                           \\
{F(x',y',y')} \arrow[rr, "{f_{x',y'}}"']                                                   &  & {G(x',y',y')} \arrow[u, "{G(\alpha,\beta,id_{y'})}"']
\end{tikzcd}
$$
Since the formal expression (\ref{Formal expression for natural maps}) yields the partition of arguments $R$, we may leave $R$ implicit in our conversation about heuristic naturality, if the generic expression is already understood. This explains the word `heuristic' in the name `heuristic naturality'.
\begin{example}
We attain the notions of twisted, diagonal and extraordinary naturality as special cases. Furthermore, the naturality of the family of evaluations is obtained.
\begin{description}
    \item[Naturality] Let $F\colon C\ra D$ and $G\colon C\ra D$ be covariant functors. Let $\eta$ be a collection of morphisms $\eta_x\colon Fx\ra Gx$ indexed over the objects of $C$. The collection $\eta$ defines a natural transformation if and only if it defines a $\Delta$-transformation where $\Delta\colon C\ra C\times C$ is the diagonal functor.

    \item [Twisted naturality] Let $F,G\colon C\ra D$ be functors where $F$ is covariant and $G$ contravariant. Let $\eta$ be a collection of morphisms $\eta_x\colon Fx\ra Gx$ indexed over the objects of $C$. The $\Delta$-naturality for $\eta$ is that the diagram
    $$
    \begin{tikzcd}
Fx \arrow[d, "F(f)"'] \arrow[r, "\eta_x"] & Gx                    \\
Fy \arrow[r, "\eta_y"]                    & Gy \arrow[u, "G(f)"']
\end{tikzcd}
    $$
    commutes for any morphism $f\colon x\ra y$ in $C$.

    \item [Diagonal naturality] Let $F,G\colon C\times C\ra D$ be functors with variance $(1,0)$. Let $\eta$ be a collection of morphisms $\eta_x\colon F(x,x)\ra G(x,x)$, then the $(\Delta,\Delta)$-naturality condition for $\eta$ (or $R$-naturality for the appropriate partition of arguments $R$) is the commutation of the diagram
    $$
    \begin{tikzcd}
{F(x,x)} \arrow[r, "\eta_x"]                              & {G(x,x)} \arrow[d, "{G(id_x,f)}"]  \\
{F(y,x)} \arrow[u, "{F(f,id_x)}"] \arrow[d, "{F(id,f)}"'] & {G(x,y)}                           \\
{F(y,y)} \arrow[r, "\eta_y"']                             & {G(y,y)} \arrow[u, "{G(f,id_x)}"']
\end{tikzcd}
    $$
    for any morphism $f\colon x\ra y$ in $C$. This is exactly the condition for diagonal naturality. It is of interest to note that every heuristic transformation between functors of index-variance can be thought of as a dinatural transformation:
    Let $A$ be an $I$-list of categories and let $B$ be a $J$-list of categories. Given a span $(R,L)\colon \prod A\ra \prod B$ and functors of index variance $F\colon \prod A\ra C$ and $G\colon \prod B\ra C$, we define functors $\overline{F},\overline{G}\colon R\times R\ra C$ with index-variance $(1,0)$ as follows:
For each index $i \in I+J$, if $i$ is a covariant index, we consider the functor $R\times R\xrightarrow{\pi_2} R\xrightarrow{\pi_i\circ L} (AB)_i$. If $i$ is a contravariant index, we consider the functor $R\times R\xrightarrow{\pi_1} R\xrightarrow{\pi_i\circ L} (AB)_i$. As $i$ runs over the set $I+J$, the considered functors, by the universal property of the product, yield functors $R\times R\ra \prod A$ and $R\times R\ra \prod B$. Post-composing these functors with $F$ and $G$, respectively, yields the functors of variance $\overline{F}$ and $\overline{G}$.

Now, consider the family $\eta$ of morphisms
    $
    \eta_x\colon \overline{F}(x,x)\ra \overline{G}(x,x)
    $
for each object $x$ of $R$. After a direct verification, we find that $\eta$ defines a dinatural transformation $\overline{F}\Rightarrow\overline{G}$ if and only if the family of morphisms $\eta$ defines an $L$-transformation $F\Rightarrow G$.
 
    \item [Extra-naturality] Let $F\colon A\times B\times B\ra D$ and $G\colon A\times C\times C\ra D$ be functors of variance $(0,1,0)$. The heuristic naturality condition for a collection $\eta$ of morphisms 
    $
    \eta_{x,y,z}\colon F(x,y,y)\ra G(x,z,z),
    $
    indexed over objects $x,y,z$ of $A,B$ and  $C$, respectively, is that the diagram
    $$
    \begin{tikzcd}
{F(x,y,y)} \arrow[rr, "{\eta_{x,y,z}}"]                                &  & {G(x,z,z)} \arrow[d, "{G(f,id_z,h)}"]              \\
{F(x,y',y)} \arrow[u, "{F(id_x,g,id_y)}"] \arrow[d, "{F(f,id_y',g)}"'] &  & {G(x',z,z')}                                       \\
{F(x',y',y')} \arrow[rr, "{\eta_{x',y',z'}}"']                         &  & {G(x',z',z')} \arrow[u, "{G(id_{x'},h,id_{z'})}"']
\end{tikzcd}
    $$
    commutes for morphisms $f\colon x\ra x'$, $g\colon y\ra y'$ and $h\colon z\ra z'$ in the categories $A,B$ and $C$, respectively. This notion of heuristic naturality is equivalent to extra-naturality. However, the usual condensed diagram for the extra-naturality condition is given by the diagram
    $$
    \begin{tikzcd}
{F(x,y,y)} \arrow[rr, "{\eta_{x,y,z'}}"]                               &  & {G(x,z',z')} \arrow[d, "{G(f,h,id_{z'})}"]    \\
{F(x,y',y)} \arrow[u, "{F(id_x,g,id_y)}"] \arrow[d, "{F(f,id_y',g)}"'] &  & {G(x',z,z')}                                  \\
{F(x',y',y')} \arrow[rr, "{\eta_{x',y',z}}"']                          &  & {G(x',z,z)} \arrow[u, "{G(id_{x'},id_z,h)}"']
\end{tikzcd}
    $$
    for morphisms $f,g,h$ (see \cite{Eilenberg1966}). This diagram does not allow an immediate generalization, which could be one of the reasons heuristic naturality has not been discovered before. 

    \item [Naturality of evaluation]
    Let $C$ be a monoidal closed category with the monoidal product and internal hom-functors $\otimes, [-,-]\colon C\times C\ra C$ with variances $(0,0)$ and $(1,0)$, respectively. Denote the counit of the adjunction as a family of morphisms
    $$
    ev_{a,b}\colon [a,b]\otimes a\ra b\text{ for objects $a,b$ in $C$}.
    $$
    From the formal expression itself, we may guess, what kind of heuristic naturality condition $ev$ satisfies. Let $f\colon a\ra a'$ and $g\colon b\ra b'$ be morphisms in $C$. Then the naturality condition is that the diagram
    $$
    \begin{tikzcd}
{[a,b]\otimes a} \arrow[rr, "{ev_{a,b}}"]                                         &  & b \arrow[dd, "g"] \\
{[a',b]\otimes a} \arrow[d, "{[id,g]\otimes f}"'] \arrow[u, "{[f,id]\otimes id}"] &  &                   \\
{[a',b']\otimes a'} \arrow[rr, "{ev_{a',b'}}"']                                   &  & b'               
\end{tikzcd}
    $$
    commutes. This is the correct notion of naturality for the evaluation maps.
\end{description}
\end{example}
The family of evaluation maps of a monoidal closed category is a motivating example for the notion of generalized extra-naturality, which is defined in the paper \cite{Eilenberg1966} by Eilenberg and Kelly. In our formulation, a general extra-natural transformation between functors of index variance $F\colon \prod A\ra C$ and $G\colon \prod B\ra C$ consists of an $R$-transformation $\eta\colon F\Rightarrow G$ where each equivalence class of the partition of arguments $R$ has exactly two elements. Additionally, if $i R j$ and $i\neq j$, then $i$ and $j$ differ in variance, if and only if they are elements of the same set $I$ or $J$ in $I+J$.

\section{Generalized comma categories}
We define the generalized comma category, which is connected to the definition of heuristic naturality. The generalized comma category gives us a conceptual framework to view heuristic natural transformation as sections to a forgetful functor. This is used to prove that for naturality, it suffices to check naturality with respect to morphisms of a certain generating subgraph. This generalizes the case where componentwise naturality implies naturality when the domain of the functors is a finite product of categories.
\begin{definition}[Generalized comma category]
    Let $F\colon A\ra C$ and $G\colon B\ra C$ be functors with variance. Let $(R,L)\colon A\ra B$ be a span. We define the generalized comma category $F\downarrow_L G$ as follows:
    \begin{itemize}
        \item The objects are pairs $(x,\alpha\colon FL_1 x\ra GL_2 x)$ where $x$ is an object of $R$ and $\alpha$ is a morphism in $C$.
        \item A morphism $(x,\alpha)\ra (y,\beta)$ consists of a morphism $f\colon x\ra y$ in $R$ making the diagram
        $$
\begin{tikzcd}
FL_1x \arrow[rr, "\alpha"]                                        &  & GL_2x \arrow[d, "G(\pi_2g^e)"]  \\
F(\pi_1 g_s) \arrow[u, "F(\pi_1 g_m)"] \arrow[d, "F(\pi_1 g_e)"'] &  & G(\pi_2 g_t)                    \\
FL_1y \arrow[rr, "\beta"']                                        &  & GL_2y \arrow[u, "G(\pi_2g^m)"']
\end{tikzcd}
        $$
        commute, where $g \coloneqq L(f)$.
        \item The composition of morphisms is defined through the composition in $C$.
    \end{itemize}
\end{definition}
Fix functors of variance $F\colon A\ra C$ and $G\colon B\ra C$. Let $(R,L)\colon A\ra B$ be a span. To see that the composition in the comma category $F\downarrow_L G$ is well defined, fix morphism $(x,\alpha)\xrightarrow{p} (y,\beta)\xrightarrow{q}(z,\gamma)$ in $F\downarrow_L G$. Denote $f = L(p)$ and $g = L(q)$. Now the diagram
$$
\begin{tikzcd}
                                                                          & F\pi_1 x \arrow[r, "\alpha"]            & G\pi_2 x \arrow[rd, "G\pi_2f^e"]                                                 &                                          \\
F\pi_1 f_s \arrow[ru, "F\pi_1f_m"] \arrow[rd, "F\pi_1f_e" description]    &                                         &                                                                                  & G\pi_2 f_t \arrow[d, "G\pi_2(g^ef^m)^e"] \\
F\pi_1(gf)_s \arrow[u, "F\pi_1(g_mf_e)_m"] \arrow[d, "F\pi_1(g_mf_e)_e"'] & F\pi_1 y \arrow[r, "\beta" description] & G\pi_2 y \arrow[ru, "G\pi_2f_m" description] \arrow[rd, "G\pi_2g^e" description] & G\pi_2(gf)_t                             \\
F\pi_1g_s \arrow[rd, "F\pi_1g_e"'] \arrow[ru, "F\pi_1g^m" description]    &                                         &                                                                                  & G\pi_2g_t \arrow[u, "G\pi_2(g^ef^m)^m"'] \\
                                                                          & F\pi_1 z \arrow[r, "\gamma"']           & G\pi_2 z \arrow[ru, "G\pi_2g_m"']                                                &                                         
\end{tikzcd}
$$
commutes by the functoriality of $F$ and $G$ and the definition of the morphisms $\alpha,\beta$ and $\gamma$. Thus $q\circ p$ defines a morphism $(x,\alpha)\ra (z,\gamma)$ in the comma category. 
\begin{example}
    The generalized comma category generalizes the notion of comma category, the category of algebras and coalgebras for contravariant and covariant functors. Interestingly, it contains the notion of heuristic naturality as well.
    \begin{itemize}
        \item Let $F\colon A\ra C$ and $G\colon B\ra C$ be functors with variance and let $(R,L)\colon A\ra B$ be a span. Consider the forgetful functor $U\colon F\downarrow_L G\ra R$. The $L$-transformations are in bijective correspondence with sections to $U$. An $L$-transformation $\alpha\colon F\Rightarrow G$ produces a section via $x\mapsto (x,\alpha_x)$ and $[f\colon x\ra y]\mapsto [f\colon (x,\alpha_x)\ra (y,\alpha_y)]$. The naturality condition of $\alpha$  concerning $f$ is exactly the condition for $\alpha(f)$ to be a well-defined morphism in the comma category.
        \item Let $F\colon A\ra C$ and $G\colon B\ra C$ be covariant functors. The usual comma category $F\downarrow G$ is the same as $F\downarrow_{Id} G$ where $Id\colon A\times B\ra A\times B$ is the identity functor.
        \item Let $F,G\colon A\ra B$ be a covariant functors. Reflect on the diagonal functor $\Delta\colon A\ra A\times A$ and the comma category
        $$
        F\downarrow_\Delta G.
        $$
        The objects of $F\downarrow_\Delta G$ could be called $F,G$-algebras. The objects consist of pairs $(a,t\colon F(a)\ra G(a))$ where $a$ is an object of $A$ and $t$ is a morphism in $B$. A morphism $f\colon (a,s)\ra (b,t)$ is a morphism $a\xrightarrow{f} b$ in $A$ making the diagram
        $$
        \begin{tikzcd}
F(a) \arrow[d, "s"'] \arrow[r, "F(f)"] & F(b) \arrow[d, "t"] \\
G(a) \arrow[r, "G(f)"']                & G(b)               
\end{tikzcd}
        $$
        commute. If $F$ or $G$ is the identity functor on $A$, then $F\downarrow_\Delta G$ is the category of $F$-algebras or the category $G$-coalgebras, respectively.

        Assume that $F$ and $G$ are the covariant power-set functor $\mathcal{P}\colon \textbf{Set}\ra \textbf{Pos}$ from the category of sets to the category of posets. Each topological space can be seen as a pair $(X,T)$, where $T\colon\mathcal{P}(X)\ra\mathcal{P}(X)$ is the closure operation, meaning it is an increasing map satisfying the conditions
        \begin{align*}
        &T(T(A)) = T(A)\\
        &A\subset T(A) \\
        &T(\emptyset) = \emptyset\\
        &T(A\cup B) = T(A)\cup T(B)
        \end{align*}

        for every $A,B\subset X$. The fixed points of $T$ define the family of closed sets of $X$. A continuous map $f\colon (X,S)\ra (Y,T)$ consists of a function $X\ra Y$ and $f(S(A))\subset T(f(A))$, which corresponds to $2$-categorical diagram
        $$
        \begin{tikzcd}
\mathcal{P}(X) \arrow[d, "S"'] \arrow[r, "\mathcal{P}(f)"]              & \mathcal{P}(Y) \arrow[d, "T"] \\
\mathcal{P}(X) \arrow[r, "\mathcal{P}(f)"'] \arrow[ru, "\leq", phantom] & \mathcal{P}(Y)               
\end{tikzcd}
        $$
If the inequality is equality, then the function $f$ is continuous and closed. Thus we see that the category of topological spaces with continuous closed maps is the full subcategory of the closure operations in $\mathcal{P}\downarrow_\Delta \mathcal{P}$.

\item Let $X$ be a set. Let $C$ be a locally small category. Denote with $\overline{X}$ the functor $1\ra \textbf{Set}$ from the terminal category to the category of sets choosing the set $X$. Examine the category $\overline{X}\downarrow_{1\times\Delta}\text{Hom}_C$. Here objects consists of pairs $(d,f\colon X\ra \text{Hom}_C(d,d))$. A morphism from $(c,f)$ to $(d,g)$ consists of a morphism $\phi\colon c\ra d$ making the diagram
$$
\begin{tikzcd}
                                   & {\text{Hom}_C(c,c)} \arrow[d, "\phi_*"]  \\
X \arrow[ru, "f"] \arrow[rd, "g"'] & {\text{Hom}_C(c,d)}                      \\
                                   & {\text{Hom}_C(d,d)} \arrow[u, "\phi^*"']
\end{tikzcd}
$$
commute. This makes $\phi$ an equivariant morphism. If $X$ is a monoid, then the full subcategory of monoid homomorphisms in $\overline{X}\downarrow_{1\times\Delta}\text{Hom}_C$ defines the functor category $[X,C]$, where the monoid $X$ is considered a single object category.
    \end{itemize}
    
\end{example}
\begin{theorem}[Componentwise naturality]\label{Componentwise naturality}
Let $F\colon C\ra D$ be a faithful functor. Let $L\colon B\ra D$ be a functor. Let $B'$ be a subgraph of $B$ generating $B$. Let $S\colon B'\ra D$ be a graph morphism. Assume that $F(S(f)) = L(f)$ for each morphism $f$ in $B'$. Then $S$ extends uniquely to a functor $S\colon B\ra D$. Furthermore, $FS = L$.
\end{theorem}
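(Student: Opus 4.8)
The plan is to define the extension of $S$ by factoring morphisms of $B$ through the generating subgraph $B'$, to prove that the resulting assignment is well defined using that $F$ is faithful, and then to read off functoriality and the equation $FS = L$. (Here the graph morphism $S$, and hence its extension, takes values in the domain $C$ of $F$.)

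Since $B'$ generates $B$, it contains every object of $B$, and every morphism $g\colon b\ra b'$ of $B$ can be written as a composite $g = f_n\circ\cdots\circ f_1$ of morphisms of $B'$, the empty composite being allowed and yielding identities. If $S$ extends to a functor $B\ra C$, then functoriality forces $S(g) = S(f_n)\circ\cdots\circ S(f_1)$; this already gives uniqueness and dictates the definition of the extension: keep $S$ on objects as prescribed by the graph morphism and set $S(g) \coloneqq S(f_n)\circ\cdots\circ S(f_1)$ for a chosen factorization.

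The crux is that $S(g)$ is independent of the chosen factorization. Given two factorizations $g = f_n\circ\cdots\circ f_1 = f'_m\circ\cdots\circ f'_1$ into morphisms of $B'$, the composites $p \coloneqq S(f_n)\circ\cdots\circ S(f_1)$ and $q \coloneqq S(f'_m)\circ\cdots\circ S(f'_1)$ are both morphisms $S(b)\ra S(b')$ of $C$, because $S$ is a graph morphism on $B'$ and composition respects domains and codomains. Applying $F$, using the hypothesis $F\circ S = L$ on $B'$, and using that $L$ is a functor, we obtain $F(p) = L(f_n)\circ\cdots\circ L(f_1) = L(g) = F(q)$. Thus $p$ and $q$ lie in the same hom-set $C(S(b),S(b'))$ and have equal image under $F$; since $F$ is faithful, $p = q$. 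So $S(g)$ is well defined, and the same computation yields $F(S(g)) = L(g)$; on objects $FS = L$ holds by the compatibility of the graph morphism $S$ with the object assignments (equivalently, $F(S(b))$ is the domain of $F(S(f)) = L(f)$ for a $B'$-morphism $f$ incident to $b$, when one exists).

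Functoriality of the extension, and the fact that it restricts to the given $S$ on $B'$, are then routine. A morphism $f$ of $B'$ is its own one-term factorization, so the two definitions of $S$ agree on $B'$ (faithfulness of $F$ already forces the original $S$ to send each identity lying in $B'$ to an identity, so the empty-composite convention causes no clash). An identity $\mathrm{id}_b$ is the empty composite, hence $S(\mathrm{id}_b) = \mathrm{id}_{S(b)}$; and concatenating a factorization of $g$ with one of $g'$ gives a factorization of $g'\circ g$, so $S(g'\circ g) = S(g')\circ S(g)$ by well-definedness. The only real obstacle is the independence-of-factorization step, and faithfulness of $F$ is precisely what makes it work: at no point do we verify directly that $S$ respects the relations holding in $B$, since faithfulness transports this from the functoriality of $L$.
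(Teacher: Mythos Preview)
Your proof is correct and follows essentially the same approach as the paper's: define the extension via a chosen factorization into $B'$-morphisms, use faithfulness of $F$ together with functoriality of $L$ to show independence of the factorization, and read off functoriality and $FS = L$. Your version is simply a more detailed execution of the paper's terse sketch, including the explicit handling of identities via empty composites and the observation that $S$ must land in $C$ rather than $D$.
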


\begin{proof}
    The uniqueness of the extension is clear since $B'$ generates $B$. For existence, fix a morphism $f$ in $B$. Since $B'$ generates $B$, there exists a path $(f_n,\ldots, f_1)$ in $B'$ whose composite in $B$ is $f$. We define $S(f)\coloneqq S(f_n)\cdots S(f_1)$. By faithfulness of $F$, $S$ is a well-defined function on morphisms. The functoriality of $S$ follows from the faithfulness of $F$ and the functoriality of $L$. It is clear that $FS = L$.
\end{proof}

Consider a generalized comma category $F\downarrow_L G$ where $F\colon A\ra C$ and $G\colon B\ra C$ are functors with variance and $(R,L)\colon A\ra B$ is a span. The forgetful functor $U\colon F\downarrow_L G\ra R$ is faithful. As an immediate consequence of Theorem \ref{Componentwise naturality}, to check the heuristic naturality of a family of morphisms $FL_1x\ra GL_2x$ for objects $x$ in $R$, it suffices to check the naturality with respect to some class of morphisms that generate $R$. 

Let $I$ and $J$ be finite sets. Let $A$ be an $I$-list of categories and $B$ be a $J$-list of categories. Fix functors of variance $F\colon \prod A\ra C$ and $G\colon \prod B\ra C$. The variances need not be index-variances. Given a partition of arguments $R\colon A\ra B$ and a family $\eta$ of morphisms $\eta_x\colon F(\pi_1x)\ra G(\pi_2x)$ for $R$-objects $x$. To check the naturality of $\eta$, it suffices to check the naturality of $R$-morphisms $f$, where $f$ is supported only on a single equivalence class of $R$. This follows from Theorem \ref{Componentwise naturality}, since the morphisms that are identities up to an equivalence class of $R$ generate the rest. We may even allow the sets $I$ and $J$ to be infinite, but then the previous argument shows naturality with respect to morphisms supported on finitely many equivalence classes.

\section{Ends}

We define the notion of a wedge and a cowedge along a span on the domain of a functor of variance. As usual, a universal wedge defines an end and a universal cowedge defines a coend. The span on the domain functions as a choice along which the end is to be taken. The traditional definition is obtained when the span is chosen to be the diagonal $C\ra C\times C$ and the functor to be of index variance $(1,0)$ for a category $C$. We show that the usual formula for calculating ends via limits still holds as does the Fubini theorem. 
\begin{definition}[$L$-wedge]
    Let $F\colon A\ra C$ be a functor with variance. Let $(R,L)$ be a span on $A$. An $L$-wedge is a pair $(c,\eta\colon\overline{c}\Rightarrow F)$, where $c$ is an object of $C$ and $\eta$ is an $L$-transformation. A morphism $f$ between $L$-wedges $(c,\eta)$ and $(d,\theta)$ is a morphism $f\colon c\ra d$ making the diagram
    $$
    \begin{tikzcd}
c \arrow[dd, "f"'] \arrow[rd, "\eta_x"] &      \\
                                        & F(Lx) \\
d \arrow[ru, "\theta_{x}"']            &     
\end{tikzcd}
    $$
    commute for all objects $x$ of $R$. Via the composition in $C$, we attain the composition between morphisms of $L$-wedges. An $L$-\textbf{cowedge} is a pair $(c,F\xRightarrow[L]{} \overline{c})$. We define the categorical structure of $L$-cowedges similarly as compared to $L$-wedges.
\end{definition}

To be explicit, an $L$-wedge $(c,\eta)$ of a functor $F\colon A\ra C$ with variance consists of an object $c$ of $C$ and a collection $\eta$ of morphisms 
$$
(\eta_x\colon c\ra F(Lx))_{x\in R}
$$
such that any morphism $f\colon x\ra y$ in $R$ yields a commuting diagram
$$
\begin{tikzcd}
                                             & F(Lx) \arrow[d, "F(L(f)^e)"]  \\
c \arrow[ru, "\eta_x"] \arrow[rd, "\eta_y"'] & F(L(f)_t)                     \\
                                             & F(Ly) \arrow[u, "F(L(f)^m)"']
\end{tikzcd}
$$

\begin{definition}[Ends]
    Let $F\colon A\ra C$ be a functor with variance and let $(R,L)$ be a span on $A$. We call the terminal $L$-wedge of $F$ \textbf{the end of $F$ over $L$} and denote it by $(\int_L F,\omega\colon\overline{\int_L F}\xRightarrow[R]{}F)$ or by just $\int_L F$ and leave the \textbf{universal wedge} $\omega$ implicit. We denote by $\int^LF$ the initial $L$-cowedge of $F$ which is also called \textbf{the coend of $F$ over $L$}.
\end{definition}
It is useful to note now that any universal wedge will define a jointly monic family of morphisms.
\begin{theorem}\label{Ends as limits}
    Let $F\colon A\ra C$ be a functor with variance. Let $(R,L)$ be a span on $A$. Assume that the products
    $$
    \prod_x F(Lx) \text{ and }\prod_{f}F(L(f)_t)
    $$
    exist, where the indexing is over objects and morphisms of $R$, respectively. Consider the canonical morphisms
    $$
    s,t\colon \prod_x F(Lx)\ra \prod_{f}F(L(f)_t)
    $$
    defined by the diagram
    $$
   \begin{tikzcd}
                                                                                                     & F(x) \arrow[rd, "F(L(f)^e)"]                                                       &           \\
\prod_x F(x) \arrow[ru, "\pi_{x}"] \arrow[rd, "\pi_{y}"'] \arrow[r, "s"'] \arrow[r, "t", shift left] & \prod_{f}F(L(f)_t) \arrow[r, "\pi_f"] & F(L(f)_t) \\
                                                                                                     & F(y) \arrow[ru, "F(L(h)^m)"']                                                      &          
\end{tikzcd}
    $$
    where the upper part and the lower part commute for every morphism $f\colon x\ra y$ in $R$, respectively. Then the transposition 
    $$
    (c,\eta\colon \overline{c}\xRightarrow[R]{} F)\mapsto (c,\eta\colon c\ra\prod_x F(x))
    $$
    defines an isomorphism between the category of $L$-wedges of $F$ and the category of equalizing cones of $s$ and $t$. Especially, $\int_L F$ exists if and only the equalizer of $t$ and $s$ exists, and in such a situation they agree.
\end{theorem}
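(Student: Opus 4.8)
The plan is to realize the stated transposition as an isomorphism of categories between $L$-wedges of $F$ and equalizing cones of the pair $(s,t)$, and then to read off the statement about ends from the fact that an isomorphism of categories preserves and reflects terminal objects.

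First I would check that the transposition is well defined on objects. Given an $L$-wedge $(c,\eta)$, the family $(\eta_x\colon c\ra F(Lx))_{x}$ induces, by the universal property of the product $\prod_x F(Lx)$, a unique morphism $\hat\eta\colon c\ra\prod_x F(Lx)$ with $\pi_x\hat\eta=\eta_x$ for every object $x$ of $R$. To see that $(c,\hat\eta)$ is an equalizing cone I must verify $s\hat\eta=t\hat\eta$; since the projections $(\pi_f)_f$ of $\prod_f F(L(f)_t)$ are jointly monic, it is enough to check $\pi_f s\hat\eta=\pi_f t\hat\eta$ for each morphism $f\colon x\ra y$ of $R$, and by the defining equations $\pi_f s=F(L(f)^m)\pi_y$ and $\pi_f t=F(L(f)^e)\pi_x$ this reads exactly $F(L(f)^m)\eta_y=F(L(f)^e)\eta_x$, which is the $L$-wedge condition recorded just after the definition of $L$-wedge. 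Conversely, an equalizing cone $(c,\xi\colon c\ra\prod_x F(Lx))$ yields a family $\eta_x\coloneqq\pi_x\xi$, and the same computation read backwards shows $F(L(f)^e)\eta_x=\pi_f t\xi=\pi_f s\xi=F(L(f)^m)\eta_y$, so $(c,\eta)$ is an $L$-wedge. These two passages are manifestly mutually inverse on objects.

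Next I would handle morphisms. A morphism of $L$-wedges $(c,\eta)\ra(d,\theta)$ is a morphism $f\colon c\ra d$ of $C$ with $\theta_x f=\eta_x$ for all $x$, whereas a morphism of the corresponding equalizing cones $(c,\hat\eta)\ra(d,\hat\theta)$ is a morphism $f\colon c\ra d$ with $\hat\theta f=\hat\eta$; post-composing the latter equation with each $\pi_x$ and using $\pi_x\hat\eta=\eta_x$ and $\pi_x\hat\theta=\theta_x$ shows the two conditions are equivalent. Hence the transposition is a bijection on each hom-set, and since it leaves the underlying morphisms of $C$ unchanged it is automatically compatible with identities and composition (both computed in $C$). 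Together with the bijection on objects this exhibits the transposition as an isomorphism of categories.

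Finally, the end $\int_L F$ is by definition the terminal object of the category of $L$-wedges of $F$, while the equalizer of $s$ and $t$ is the terminal object of the category of equalizing cones of $(s,t)$; an isomorphism of categories preserves and reflects terminal objects, so one exists exactly when the other does, and then they have the same underlying object of $C$. I do not expect a genuine obstacle here: the only points requiring care are that $s$ and $t$ are well defined --- which is immediate from the universal property of $\prod_f F(L(f)_t)$, declaring $\pi_f s=F(L(f)^m)\pi_y$ and $\pi_f t=F(L(f)^e)\pi_x$ for each $f\colon x\ra y$ --- and keeping track of which jointly monic family, $(\pi_x)_x$ or $(\pi_f)_f$, is being used at each step; everything else is a direct chase through the universal properties.
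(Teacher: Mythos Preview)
Your proposal is correct and follows essentially the same route as the paper: both arguments reduce the claim to showing that a family $(\eta_x)_x$ is an $L$-wedge if and only if its transpose $\hat\eta$ equalizes $s$ and $t$, by testing against the jointly monic projections $(\pi_f)_f$ and unwinding the defining equations $\pi_f t=F(L(f)^e)\pi_x$ and $\pi_f s=F(L(f)^m)\pi_y$. Your treatment of morphisms and of the terminal-object conclusion is more explicit than the paper's (which just remarks that morphisms are handled by the identity), but the substance is the same.
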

\begin{proof}
    It suffices to show that the transposition determines a well-defined bijection between the $L$-wedges and equalizing cones since the functoriality on morphisms is defined by the identity functor. 
    Let $\eta_x\colon c\ra F(Lx)$ be a morphism for every object $x$ of $R$. We show that the collection of morphisms $\eta = (\eta_x)_x$ defines an $L$-wedge if and only if the adjoint transposition $\eta'\colon c\ra \prod_x F(x)$ equalizes $s$ and $t$.
    Notice that $t\eta' = s\eta'$ is equivalent to $\pi_f t\eta' = \pi_fs\eta'$ for every morphism $f\colon x\ra y$ in $R$, which is equivalent with $F(L(f)^e)\eta_{x} = F(L(f)^m)\eta_{y}$ for every morphism $f\colon x\ra y$ in $R$. This is exactly the condition for $\eta$ to define an $L$-wedge.
\end{proof}

In Theorem \ref{Ends as limits} we may weaken the assumptions a bit. It suffices that the product 
$$
\prod_f F(L(f)_t)
$$
exists where indexing is over morphisms $f$ in a generating subgraph $R'$ of $R$. This is due to Theorem \ref{Componentwise naturality}, since to check naturality it suffices to check it only on morphisms in $R'$.

\begin{definition}
    Let $A$ and $B$ be categories with variance. Let $F\colon A\times B\ra C$ be a functor of variance with respect to the product variance on $A\times B$. We denote $F(x,y) = F_x(y) = F^y(x), F_x(g) = F(id_x,g)$ and $F^y(f) = F(f,id_y)$ for objects $(x,y)$ and morphisms $(f,g)$ the product category $A\times B$. This defines the functors of variance $F_x\colon B\ra C$ and $F^y\colon A\ra C$ for all objects $x$ in $A$ and $y$ in $B$.
\end{definition}
\begin{theorem}[Parameter theorem]
    Let $A$ and $B$ be categories with variance. Let $(R_1,L_1)$ and $(R_2,L_2)$ be spans on $A$ and $B$, respectively. Let $F\colon A\times B\ra C$ be a functor with variance. Assume that $[\int_{L_1} F](y)\coloneqq\int_{L_1} F^y$ exists with $\omega^y$ being the universal wedge for all objects $y$ in $B$. Then $\int_{L_1}F$ extends uniquely to a functor of variance $B\ra C$ so that the diagram
    $$
\begin{tikzcd}
{[\int_{L_1}F](g_s)} \arrow[d, "\int_{L_1}(F)(g)"'] \arrow[r, "\omega^{g_s}_a"] & {F(L_1(a),g_s)} \arrow[d, "{F(id,g)}"] \\
{[\int_{L_1}F](g_t)} \arrow[r, "\omega^{g_t}_a"']                               & {F(L_1(a),g_t)}                       
\end{tikzcd}   
    $$
    commutes for every object $a$ in $R$ and morphism $g$ in $B$.
\end{theorem}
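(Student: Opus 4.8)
The plan is to define the morphism action of $\int_{L_1}F$ by the universal property of ends, exactly as in the classical parameter theorem, and to read off both functoriality and the uniqueness clause from the fact --- noted just before Theorem~\ref{Ends as limits} --- that a universal wedge is a jointly monic family. Fix a morphism $g$ in $B$, with starting and terminating objects $g_s,g_t$ for the variance on $B$. For each object $a$ of $R_1$ set
$$
\tilde\omega_a:=F(\mathrm{id}_{L_1(a)},g)\circ\omega^{g_s}_a\colon \int_{L_1}F^{g_s}\ra F(L_1(a),g_t)=F^{g_t}(L_1(a)).
$$
The core step is to show that $(\int_{L_1}F^{g_s},\tilde\omega)$ is an $L_1$-wedge of $F^{g_t}$; then terminality of the end of $F^{g_t}$ does the rest.

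To that end, write $k:=L_1(f)$ for a morphism $f\colon a\ra b$ of $R_1$, with terminating factorization $k=k^mk^e$. The wedge condition to verify is $F(k^e,\mathrm{id}_{g_t})\,\tilde\omega_a=F(k^m,\mathrm{id}_{g_t})\,\tilde\omega_b$. I would derive it from the wedge condition $F(k^e,\mathrm{id}_{g_s})\,\omega^{g_s}_a=F(k^m,\mathrm{id}_{g_s})\,\omega^{g_s}_b$ for $\omega^{g_s}$ via the interchange identities
$$
F(k^e,\mathrm{id}_{g_t})\,F(\mathrm{id},g)=F(k^e,g)=F(\mathrm{id},g)\,F(k^e,\mathrm{id}_{g_s}),
$$
$$
F(k^m,\mathrm{id}_{g_t})\,F(\mathrm{id},g)=F(k^m,g)=F(\mathrm{id},g)\,F(k^m,\mathrm{id}_{g_s}),
$$
where every $\mathrm{id}$ sits on the appropriate object and $(k^\bullet,g)$ is the corresponding morphism of $A\times B$. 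Each identity comes from feeding the evident factorization of $(k^\bullet,g)$ in $A\times B$, such as $(k^e,g)=(k^e,\mathrm{id})\circ(\mathrm{id},g)=(\mathrm{id},g)\circ(k^e,\mathrm{id})$, into the functoriality equations (\ref{Functoriality equations}) for the functor of variance $F$ and checking that the correction terms collapse: their $A$-coordinate is an identity because $k^e$ lies in $E$ (resp.\ $k^m$ lies in $M$), and their $B$-coordinate is an identity because the covariant and contravariant factors of $g$ have trivial factorizations on the opposite side. Granting this, the left side of the wedge condition is $F(k^e,g)\,\omega^{g_s}_a=F(\mathrm{id},g)\,F(k^e,\mathrm{id}_{g_s})\,\omega^{g_s}_a$, which by the wedge condition for $\omega^{g_s}$ equals $F(\mathrm{id},g)\,F(k^m,\mathrm{id}_{g_s})\,\omega^{g_s}_b=F(k^m,g)\,\omega^{g_s}_b$, the right side.

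Now terminality of $(\int_{L_1}F^{g_t},\omega^{g_t})$ supplies a unique morphism $\int_{L_1}(F)(g)\colon\int_{L_1}F^{g_s}\ra\int_{L_1}F^{g_t}$ with $\omega^{g_t}_a\circ\int_{L_1}(F)(g)=F(\mathrm{id},g)\circ\omega^{g_s}_a$ for all $a$; this is precisely the square in the statement, and it exhibits $\int_{L_1}F$ as sending $g$ from $[\int_{L_1}F](g_s)$ to $[\int_{L_1}F](g_t)$. Since $F(\mathrm{id},\mathrm{id}_y)=\mathrm{id}$, the identity of $\int_{L_1}F^y$ is a wedge morphism, so identities are preserved. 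For the functoriality equations (\ref{Functoriality equations}) of $\int_{L_1}F$, given $y\xrightarrow{g_1}y'\xrightarrow{g_2}y''$ in $B$, I would post-compose both sides of each equation with the components of the pertinent universal wedge, rewrite using the defining square of $\int_{L_1}(F)(-)$ until only the maps $F(\mathrm{id},-)$ and the $\omega$'s remain, apply the functoriality equations (\ref{Functoriality equations}) for $F$ in its $B$-variable, and then cancel by joint monicity. This makes $\int_{L_1}F$ a functor of variance $B\ra C$ rendering the square commutative.

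For uniqueness, if $H\colon B\ra C$ is a functor of variance making the square commute, then $g=\mathrm{id}_y$ forces $H(y)=[\int_{L_1}F](y)$ on objects, and for a morphism $g$ the square gives $\omega^{g_t}_a\circ H(g)=F(\mathrm{id},g)\circ\omega^{g_s}_a=\omega^{g_t}_a\circ\int_{L_1}(F)(g)$ for all $a$, whence $H(g)=\int_{L_1}(F)(g)$ by joint monicity. I expect the interchange step in the second paragraph to be the main obstacle: it is ``morally obvious'' that morphisms acting on disjoint product coordinates commute, but making this precise for a functor of variance --- which does not compose morphisms naively --- requires careful bookkeeping of the product-variance factorizations inside (\ref{Functoriality equations}); once that is in place, the remainder is formal manipulation with universal properties and joint monicity.
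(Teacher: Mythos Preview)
Your proposal is correct and follows essentially the same route as the paper's proof: you build the $L_1$-wedge $\tilde\omega$ from $\omega^{g_s}$ composed with $F(\mathrm{id},g)$, verify the wedge condition via the $L_1$-naturality of $\omega^{g_s}$ together with the ``interchange'' coming from functoriality of $F$ (the paper packages this as commutativity of the right-hand squares in a single diagram), and then obtain functoriality by post-composing with the universal wedge and invoking joint monicity (the paper phrases this as a cylindrical diagram chase). Your treatment is slightly more explicit than the paper's on two points---the bookkeeping behind the interchange identities in the product variance, and the uniqueness clause---but the underlying argument is the same.
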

\begin{proof}
Let $g$ be a morphism in $B$. First we show that $(F(id_{L_1(a)},g)\circ \omega^{g_s}_a)_{a\in R}$ defines an $L_1$-wedge $\overline{\int_{L_1} F^{g_s}}\Rightarrow F^{g_t}$. This then shows that $\int_{L_1} F$ is a well-defined function on morphisms. To show the $L_1$-naturality, fix a morphism $f\colon x'\ra y'$ in $R$. Write $L_1(f)$ as $h\colon x\ra y$. Consider the diagram
$$
\begin{tikzcd}[row sep = 1cm]
                                                                                    & {F(x,g_s)} \arrow[d, "{F(h^e,id_{g_s})}" description] \arrow[rr, "{F(id_x,g)}"]  &  & {F(x,g_t)} \arrow[d, "{F(h^e,id_{g_t})}"]  \\
\int_{L_1} F^{g_s} \arrow[ru, "\omega^{g_s}_{x'}"] \arrow[rd, "\omega^{g_s}_{y'}"'] & {F(h_t,g_s)} \arrow[rr, "{F(id_{h_t},g)}"]                                       &  & {F(h_t,g_t)}                               \\
                                                                                    & {F(y,g_s)} \arrow[u, "{F(h^m,id_{g_s})}" description] \arrow[rr, "{F(id_y,g)}"'] &  & {F(y,g_t)} \arrow[u, "{F(h^m,id_{g_t})}"']
\end{tikzcd}
$$
which commutes, since the left side commutes by the naturality of $\omega$ and the right side commute via the functoriality of $F$. Clearly $\int_{L_1}F\colon B\ra C$ maps identities to identities. To show mixed functoriality of $\int_{L_1} F$, fix morphisms $b\xrightarrow{f}b'\xrightarrow{g}b''$ in $B$. Consider the diagrams

\begin{equation}\label{Two diagrams}
\begin{tikzcd}[column sep = 1cm, row sep = 1 cm]
\int_{L_1}F(f_s) \arrow[rr, "\int_{L_1}F(f)"]                                                                                                                    &  & \int_{L_1}F(f_t) \arrow[d, "\int_{L_1}F((g^ef^m)^e)" description] & {F(L_1(a),f_s)} \arrow[rr, "{F(id,f)}"]                                                                                                         &  & {F(L_1(a),f_t)} \arrow[d, "{F(id,(g^ef^m)^e)}" description] \\
\int_{L_1}F((gf)_s) \arrow[d, "\int_{L_1}F(((g_mf_e)_e)" description] \arrow[rr, "\int_{L_1}F(gf)" description] \arrow[u, "\int_{L_1}F((g_mf_e)_m)" description] &  & \int_{L_1}F((gf)_t)                                               & {F(L_1(a),(gf)_s)} \arrow[u, "{F(id,(g_mf_e)_m)}" description] \arrow[d, "{F(id,(g_mf_e)_e)}" description] \arrow[rr, "{F(id,gf)}" description] &  & {F(L_1(a),(gf)_t)}                                          \\
\int_{L_1}F(g_s) \arrow[rr, "\int_{L_1}F(g)"']                                                                                                                   &  & \int_{L_1}F(g_t) \arrow[u, "\int_{L_1}F((g^ef^m)^m)" description] & {F(L_1(a),g_t)} \arrow[rr, "{F(id,g)}"']                                                                                                        &  & {F(L_1(a),g_t)} \arrow[u, "{F(id,(g^ef^m)^m)}" description]
\end{tikzcd}
\end{equation}
where the right diagram commutes. We show that the left diagram commutes as well. We may consider a cylindrical diagram whose top face is the left part and the bottom face is the right part of diagrams (\ref{Two diagrams}), when we add the universal wedges $\omega^b_a\colon \int_{L_1} F^b\ra F(L_1(a),b)$ for suitable objects $b$. The cylindric diagram is known to commute in all other faces except for the top face. This is because of the definition of $\int_{L_1}F$ and the fact that $F$ is a functor of mixed variance. Since the morphisms $\omega^b_a$ indexed over objects $a$ in $A$ are jointly monic, for $b = (gf)_t$, it follows from a diagram chase that the top face commutes as well. 
\end{proof}

\begin{theorem}[Limits commute, Fubini]
    Let $A$ and $B$ be categories with variance. Let $(R_1,L_1)$ and $(R_2,L_2)$ be spans on $A$ and $B$, respectively. Let $F\colon A\times B\ra C$ be a functor of variance with respect to the product variance on $A\times B$. Denote $L = L_1\times L_2\colon R\ra A\times B$ Assume that the end
    $$
    \int_{L_1}F^b
    $$
    exists with the universal wedge being $\omega^b\colon \overline{\int_{L_1}F^b}\Rightarrow F^b$ for every object $b$ of $B$. Consider the corresponding canonical functor $\int_{L_1} F\colon B\ra C$. Then the transposition
    $$
    (c,\theta\colon \overline{c}\xRightarrow[L_2]{}\int_{L_1} F)\mapsto (c,\Tilde{\theta}\colon \overline{c}\xRightarrow[L]{} F),
    $$
    where $\tilde{\theta}_{x,y} = \omega^{L_2(y)}_x\circ \theta_y$ for objects $(x,y)$ in $R$, defines an isomorphism between the categories of $L$-wedges of $F$ and the $L_2$-wedges of $\int_{L_1} F$. In particular, the end $\int_L F$ exists if and only if the end $\int_{L_2}\int_{L_1}F$ exists, and if one exists, they agree.
\end{theorem}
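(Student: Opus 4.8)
The plan is to establish the claimed isomorphism of categories directly, by checking that the transposition $(c,\theta)\mapsto(c,\tilde\theta)$ is a well-defined bijection on objects which is the identity on the underlying morphisms of $C$, so that functoriality is automatic (exactly as in the proof of Theorem \ref{Ends as limits}). Thus the whole content reduces to a single equivalence: for a family $\theta = (\theta_y\colon c\ra [\int_{L_1}F](L_2 y))_{y\in R_2}$, the family $\theta$ is an $L_2$-wedge of $\int_{L_1}F$ if and only if the family $\tilde\theta = (\tilde\theta_{x,y} = \omega^{L_2(y)}_x\circ\theta_y)_{(x,y)\in R}$ is an $L$-wedge of $F$. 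Once this equivalence is proven, the final sentence about existence follows because terminal objects are preserved by isomorphisms of categories, and $\int_L F$, $\int_{L_2}\int_{L_1}F$ are by definition the terminal objects of the respective $L$- and $L_2$-wedge categories.

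First I would unwind what it means for $\tilde\theta$ to be an $L$-wedge. By the span structure $L = L_1\times L_2$, a morphism $f\colon (x,y)\ra (x',y')$ in $R$ has $L(f) = (L_1 f_1, L_2 f_2)$ componentwise (here I would actually work with the span $(R,L)$ as given; the factorization of $L(f)$ in the product variance $A\times B$ splits as the pair of factorizations of $L_1(f)$ in $A$ and $L_2(f)$ in $B$, by the example on product categories in Section 1). So the $L$-naturality square for $\tilde\theta$ with respect to $f$ decomposes into a ``horizontal'' part governed by $L_1(f)$ and a ``vertical'' part governed by $L_2(f)$. I would then show: (i) taking $f$ with $L_2(f)$ an identity, the $L$-wedge condition on $\tilde\theta$ becomes precisely the $L_1$-wedge condition on $(\theta_y)$ for fixed $y$ applied through $\omega^{L_2 y}$ — but since $\theta_y$ factors through the universal wedge $\omega^{L_2 y}$, this is automatic; (ii) taking $f$ with $L_1(f)$ an identity, the condition becomes the $L_2$-wedge condition for $\theta$ composed with the naturality of the $\omega^{(-)}_x$ in the parameter, which is the content of the Parameter Theorem defining $\int_{L_1}F$ as a functor; (iii) the general $f$ factors as a composite of these two special types, so by Theorem \ref{Componentwise naturality} (applied to the forgetful functor from the generalized comma category / or directly, since wedge conditions compose) it suffices to check (i) and (ii). Conversely, given that $\tilde\theta$ is an $L$-wedge, restricting to morphisms of type (ii) and using that the $\omega^{L_2 y}_x$ are jointly monic (as a universal wedge, noted just before Theorem \ref{Ends as limits}) recovers the $L_2$-wedge condition for $\theta$.

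The main obstacle I anticipate is bookkeeping: correctly identifying the terminating-object data $L(f)_t$, $L(f)^e$, $L(f)^m$ in the product variance with the pairs $(L_1(f)_t, L_2(f)_t)$ etc., and then performing the diagram chase in step (ii) that reconciles ``$\tilde\theta$ is natural with respect to the $B$-direction'' with ``$\theta$ is an $L_2$-wedge of the functor $\int_{L_1}F$.'' Concretely, the Parameter Theorem gives the commuting square relating $[\int_{L_1}F](g)$ and $F(id,g)$ through the $\omega^{g_s}_a,\omega^{g_t}_a$; one must paste this square onto the $L_2$-wedge square for $\theta$ and use joint monicity of $(\omega^{L_2 y}_x)_x$ to transfer commutativity in both directions. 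This is the same style of argument as in the Parameter Theorem's own proof (the ``cylindrical diagram'' whose top face commutes because the other faces do and the vertical maps are jointly monic), so I would model it on that. Everything else — bijectivity of the object-level transposition (its inverse sends $\tilde\theta$ to $\theta_y := $ the unique map induced into $\int_{L_1}F^{L_2 y}$ by the $L_1$-wedge $(\tilde\theta_{x,y})_x$), and the reduction of the existence claim to preservation of terminal objects — is routine.
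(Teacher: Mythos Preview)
Your proposal is correct and follows essentially the same approach as the paper: both directions rest on the same ingredients (the Parameter Theorem square for the $B$-direction, the $L_1$-naturality of each $\omega^{L_2 y}$ for the $A$-direction, and joint monicity of $(\omega^{g_t}_a)_a$ for the converse), and the inverse is constructed exactly as you describe. The one organizational difference is that you invoke Theorem \ref{Componentwise naturality} to reduce the forward $L$-wedge check to morphisms supported in a single factor of $R_1\times R_2$, whereas the paper handles an arbitrary morphism $(p,q)$ in a single $3\times 3$ diagram; this is a cosmetic rearrangement of the same chase.
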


\begin{proof}
    First, we show that the transposition is well-defined. Let $(c,\theta)$ be an $L_2$-wedge of $\int_{L_1} F$. We show that $\tilde{\theta}$, which is the collection of morphisms
    $$
    \tilde{\theta}_{x,y}\colon c\xrightarrow{\theta_y} \int_{L_1} F^{L_2(y)}\xrightarrow{\omega^{L_2(y)}_x} F(L(x,y)),\text{ for objects $(x,y)$ in $R$},
    $$
    defines an $L$-wedge. Let $(p,q)\colon (a,x)\ra (b,y)$ be a morphism in $R$. Denote $(f,g)\coloneqq L(p,q)$. Consider the commutative diagram
    $$
    \begin{tikzcd}[column sep = 2 cm, row sep = 1cm]
c \arrow[r, "\theta_x"] \arrow[d, "\theta_y"']                                      & \int_{L_1}F^{L_2(x)} \arrow[r, "\omega^{L_2(x)}_a"] \arrow[d, "\int_{L_1}F(g^e)" description]     & {FL(a,x)} \arrow[d, "{F(id, g^e)}"]      \\
\int_{L_1}F^{L_2(y)} \arrow[d, "\omega^{L_2(y)}_b"'] \arrow[r, "\int_{L_1}F(g^m)"'] & \int_{L_1}F^{g_t} \arrow[d, "\omega^{g_t}_c" description] \arrow[r, "\omega^{g_t}_a" description] & {F(L_1(a),g_t)} \arrow[d, "{F(f^e,id)}"] \\
{FL(b,y)} \arrow[r, "{F(id,g^m)}"']                                                 & {F(L_1(b),g_t)} \arrow[r, "{F(f^m,id)}"']                                                         & {F((f,g)_t)}                            
\end{tikzcd}
    $$
The rectangles commute, because of the $L_2$-naturality of $\theta$, the definition of the functor $\int_{L_1}F$ and the $L_1$-naturality of $\omega^{g_t}$.

To see the bijectivity, fix an $L$-wedge $(c,\eta)$ of $F$. We construct an $L_2$-wedge $(c,\theta)$ of $\int_{L_1}F$. We define $\theta_y\colon c\ra [\int_{L_1}F](L_2(y))$ to be the unique morphism making the diagram
$$
\begin{tikzcd}
                                                      & \int_{L_1}F^{L_2(y)} \arrow[d, "\omega^{L_2(y)}_x"] \\
c \arrow[r, "{\eta_{(x,y)}}"'] \arrow[ru, "\theta_y"] & {FL(x,y)}                                          
\end{tikzcd}
$$
commute for each object $x$ in $R_1$. Fix an object $y$ in $R_2$. The morphisms $\theta_y$ are well defined, since $L$ naturality of $\eta$ implies that $(\eta_{x,y})_x$ is $L_1$-natural for each $y$. To see that the family $(\theta_y)_y$ is $L_2$-natural, fix a morphism $p\colon b\ra y$ in $R_2$. Denote $g = L_2(p)$. Let $a$ be an object of $R_1$. Consider the diagram
$$
\begin{tikzcd}[row sep= 1cm]                                                                                                                                        & {F(L_1(a),L_2(b))} \arrow[rdd, "{F(id,g^e)}"]                                                             &                 \\
                                                                                                                                                        & \int_{L_1}F^{L_2(b)} \arrow[d, "\int_{L_1}F(g^e)" description] \arrow[u, "\omega_a^{L_2(b)}" description] &                 \\
c \arrow[ru, "\theta_b" description] \arrow[ruu, "{\eta_{a,b}}", bend left] \arrow[rdd, "{\eta_{a,y}}"', bend right] \arrow[rd, "\theta_y" description] & \int_{L_1}F^{g_t} \arrow[r, "\omega^{g_t}_a"]                                                             & {F(L_1(a),g_t)} \\
                                                                                                                                                        & \int_{L_1}F^{L_2(y)} \arrow[u, "\int_{L_1}F(g^m)" description] \arrow[d, "\omega_a^{L_2(y)}" description] &                 \\
                                                                                                                                                        & {F(L_1(a),L_2(y))} \arrow[ruu, "{F(id,g^m)}"']                                                            &                
\end{tikzcd}
$$
A diagram chase using the fact that the family $(\omega_a^{g_t})_a$ of morphisms is jointly monic shows that the family of morphisms $(\theta_y)_y$ is $L_2$-natural at $p$. This shows that the transposition defined in the theorem is bijective. The mapping of morphisms $(c,\theta)\xrightarrow{\alpha} (d,\phi)\mapsto (c,\tilde{\theta})\xrightarrow{\alpha}(c,\tilde{\phi})$ is well defined and extends the transposition into an isomorphism of categories.
\end{proof}

\section*{Acknowledgements}
I would like to express a heartfelt gratitude to my supervisor, Professor Van der Linden, for their insightful feedback, and expert guidance throughout preparing my first article. I am grateful for their mentorship.

I extend my appreciation to my colleagues in the Junior Category Theory Seminar hosted in UCLouvain for the inspiration.

\bibliographystyle{unsrtnat}
\bibliography{tim}
\end{document}